\begin{document}

\title{Taylor's formula and related inequalities for a derivative with a new
parameter
}

\titlerunning{Inequalities for a derivative with a new parameter}        

\author{Deniz U\c{c}ar 
}


\institute{Deniz U\c{c}ar \at
              Department of Mathematics, Faculty of Sciences and Arts, U\c{s}ak
University, Turkey \\
              Tel.: +90-276-221-2121\\
              Fax: +90-276-221-2135\\
              \email{deniz.ucar@usak.edu.tr}           
          }

\date{Received: date / Accepted: date}

\maketitle

\begin{abstract}
In this paper, we derive Taylor's theorem for beta-fractional derivative. We
also investigate some new properties of Taylor's theorem and some useful
related theorems for this derivative. We extend some recent and classical
integral inequalities to this new simple interesting fractional calculus
including Steffensen and Hermit-Hadamard inequality.
\keywords{Taylor's formula \and beta-integral \and Steffensen inequality \and
Hermite-Hadamard inequality}
\end{abstract}

\section{Introduction}
\label{intro}
Since L'Hospital in 1965 asked "What does $\frac{d^{n}f}{dx^{n}}$
mean if $n=\frac{1}{2}"~$to Lebniz, many researchers tried to define a
fractional derivative. Most of them defined integral form for the fractional
derivative. The most popular ones are:

(i) The Riemann-Liouville fractional derivative of a function $f$ is defined
as
\begin{eqnarray*}
D_{x}^{\alpha }\left( f\left( x\right) \right) =\frac{1}{\Gamma
\left( n-\alpha \right) }\left( \frac{d}{dx}\right)
^{n}\int\limits_{0}^{x}\left( x-t\right) ^{n-\alpha -1}f\left(
t\right) dt,~n-1<\alpha \leq n.
\end{eqnarray*}

(ii) Caputo's definition of fractional derivative is illustrated as follows
\begin{eqnarray*}
_{a}^{C}D_{t}^{\alpha }\left( f\left( t\right) \right)
=\frac{1}{\Gamma \left( n-\alpha \right) }\int\limits_{0}^{x}\left(
x-t\right) ^{n-\alpha -1}f^{\left( n\right) }\left( \tau \right)
d\tau ,~n-1<\alpha \leq n.
\end{eqnarray*}

(iii) The modified Liouville fractional derivative of a function $f$ is
defined as
\begin{eqnarray*}
D_{x}^{\alpha }\left( f\left( x\right) \right) =\frac{1}{\Gamma
\left( n-\alpha \right) }\left( \frac{d}{dx}\right)
^{n}\int\limits_{0}^{x}\left( x-t\right) ^{n-\alpha -1}\left(
f\left( t\right) -f\left( 0\right) \right) dt,~n-1<\alpha \leq n.
\end{eqnarray*}

(iv) \cite{d} The conformable fractional derivative of $f$ of order $\alpha $
is defined by
\begin{eqnarray*}
 f^{\left( \alpha \right) }\left( t\right) =\lim_{\varepsilon \to 0}\frac{f\left( t+\varepsilon t^{1-\alpha }\right) -f\left( t\right)
}{\varepsilon } \\
\end{eqnarray*}%
for $t>0,~\alpha \in \left( 0,1\right) $.

(v) \cite{f} The modified conformable fractional derivative is defined as
\begin{eqnarray*}
D^{\alpha }\left( f\right) \left( t\right) =\lim_{\varepsilon \to 0}\frac{f\left( te^{\varepsilon t^{-\alpha }}\right)
-f\left( t\right) }{\varepsilon }\\
\end{eqnarray*}%
for $t>0,~\alpha \in \left( 0,1\right) $.

For a review of this topic we direct the reader to the monograph \cite{a}.
However those fractional derivatives have some inconsistencies. In instance,
if $\alpha $ is not a natural number, most of the defined fractional
derivatives do not satisfy $D_{a}^{\alpha }\left( 1\right) =0.$ Some of the
fractional derivatives do not satisfy product rule for two functions. The
conformable and modified conformable fractional derivatives satisfy the
common properties of the standart rules but they have some limitations. We
can see the weakness of the defined fractional derivatives in \cite{f}.

A. Atangana et al in \cite{b} proposed a suitable derivative called the
Beta-derivative that allowed us to escape the lack of the fractional
derivatives. We use beta-fractional derivative introduced by Abdon Atangana
in \cite{b} to obtain our results.

\begin{definition}
Let $f$ be a function, such that $f:\left[ a,\infty \right) \rightarrow
\mathbb{R}.$ Then, the beta-derivative of a function $f$ is defined as%
\begin{eqnarray*}
_{0}^{A}D_{x}^{\beta }\left( f\left( x\right) \right) =\lim_{\varepsilon \to 0}\frac{f\left( x+\varepsilon \left( x+\frac{1}{\Gamma
\left( \beta \right) }\right) ^{1-\beta }\right) -f\left( x\right) }{%
\varepsilon },
\\
\end{eqnarray*}%
for all $x\geq a,$\bigskip\ $\beta \in \left( 0,1\right] .$ Then if the
limit exists, $f$ is said to be $\beta $-differentiable.
\end{definition}

There is a relation between $\beta $-derivative and usual derivative.%
\begin{eqnarray*}
_{0}^{A}D_{x}^{\beta }\left( f\left( x\right) \right) =\left( x+\frac{1}{%
\Gamma \left( \beta \right) }\right) ^{\beta -1}f^{^{\prime }}\left( x\right)
 \\
\end{eqnarray*}%
where $f^{^{\prime }}\left( x\right) =\lim_{\varepsilon \to 0}\frac{%
f\left( x+h\right) -f\left( x\right) }{h}.$

\begin{definition}
Let $f:\left[ a,b\right] \rightarrow \mathbb{R}$ be a continuous function on
the opened interval $\left( a,b\right) ,$ then the $\beta $-integral of f is
given as:%
\begin{eqnarray*}
_{0}^{A}I_{t}^{\beta }\left( f\left( t\right) \right) =\int_{0}^{t}\left( x+%
\frac{1}{\Gamma \left( \beta \right) }\right) ^{\beta-1}f\left( x\right) dx.
 \\
\end{eqnarray*}%
This integral was recently reffered to as the Atangana beta-integral.
\end{definition}
\begin{definition}
Let $\beta \in \left( 0,1\right] $ and $0\leq a<b.$ A function $f:\left[ a,b%
\right] \rightarrow \mathbb{R}$ is $\beta -fractional$ integrable on $\left[
a,b\right] $ if the integral
\begin{eqnarray*}
\int_{a}^{b}f\left( t\right) d_{\beta }t:=\int_{a}^{b}\left( t+\frac{1}{%
\Gamma \left( \beta \right) }\right) ^{\beta -1}f\left( t\right) dt
 \\
\end{eqnarray*}%
exists and finite.
\end{definition}

We assume the reader is familiar with the notation and basic results for
fractional calculus. For a review of this topic we direct the reader to the
monograph \cite{e,c,b,b1}.

\section{Main Results}
\label{sec:1}
In this section, we give the main theorem of the paper and obtain
some results close to the results in classical calculus. We first introduce
Taylor formula with a new parameter.
\begin{theorem}
\bigskip \label{Taylor Formula}Let $\beta \in \left( 0,1\right] $ and $n\in
\mathbb{N}$. If the function $f$ is $\left( n+1\right) $ order $\beta
-fractional$ differentiable on $\left[ 0,\infty \right) $ and $s,t\in \left[
0,\infty \right) $, then we have
\begin{eqnarray*}
f\left( t\right) &=&\sum\limits_{k=0}^{n}\frac{\beta
^{-k}}{k!}\left[
\left( t+\frac{1}{\Gamma \left( \beta \right) }\right) ^{\beta }-\left( s+%
\frac{1}{\Gamma \left( \beta \right) }\right) ^{\beta }\right]
^{k}D_{s}^{k\beta }f\left( s\right) \\
\end{eqnarray*}
\begin{eqnarray}
&&+\frac{\beta ^{-n}}{n!}\int_{s}^{t}\left[ \left( t+\frac{1}{\Gamma \left(
\beta \right) }\right) ^{\beta }-\left( \tau +\frac{1}{\Gamma \left( \beta
\right) }\right) ^{\beta }\right] ^{n}D_{\tau }^{\left( n+1\right) \beta
}f\left( \tau \right) d_{\beta }\tau .  \label{1}
\end{eqnarray}
\end{theorem}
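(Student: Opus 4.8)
The plan is to prove (\ref{1}) by induction on $n$, the engine being the reciprocity between the weight that the $\beta$-derivative attaches to an ordinary derivative and the weight carried by the $\beta$-integral. Throughout I write $g(x)=\left(x+\frac{1}{\Gamma(\beta)}\right)^{\beta}$, so that the bracketed factors in (\ref{1}) are just $g(t)-g(s)$ and $g(t)-g(\tau)$, and I keep in mind that $d_{\beta}\tau=\left(\tau+\frac{1}{\Gamma(\beta)}\right)^{\beta-1}d\tau$ while, from the increment in Definition~1, $D_{\tau}^{\beta}$ multiplies $f'$ by $\left(\tau+\frac{1}{\Gamma(\beta)}\right)^{1-\beta}$. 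Two auxiliary computations carry the argument. First, since $g'(\tau)=\beta\left(\tau+\frac{1}{\Gamma(\beta)}\right)^{\beta-1}$, a one-line chain-rule calculation gives the reduction formula $D_{\tau}^{\beta}\left\{[g(t)-g(\tau)]^{k}\right\}=-\beta k\,[g(t)-g(\tau)]^{k-1}$. Second, because the factor $\left(\tau+\frac{1}{\Gamma(\beta)}\right)^{1-\beta}$ produced by $D_{\tau}^{\beta}$ is exactly the reciprocal of the factor $\left(\tau+\frac{1}{\Gamma(\beta)}\right)^{\beta-1}$ in $d_{\beta}\tau$, the two cancel and one obtains the $\beta$-integration-by-parts rule $\int_{s}^{t}u\,(D_{\tau}^{\beta}v)\,d_{\beta}\tau=[uv]_{s}^{t}-\int_{s}^{t}(D_{\tau}^{\beta}u)\,v\,d_{\beta}\tau$, which collapses to the classical $\int_{s}^{t}uv'\,d\tau=[uv]_{s}^{t}-\int_{s}^{t}u'v\,d\tau$ after that cancellation.

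For the base case $n=0$ the assertion is $f(t)=f(s)+\int_{s}^{t}D_{\tau}^{\beta}f(\tau)\,d_{\beta}\tau$; expanding the integrand, the two weights cancel and the integral reduces to $\int_{s}^{t}f'(\tau)\,d\tau=f(t)-f(s)$, i.e.\ the fundamental theorem of this calculus. For the inductive step I assume (\ref{1}) at level $n-1$ and transform its remainder $R_{n-1}=\frac{\beta^{-(n-1)}}{(n-1)!}\int_{s}^{t}[g(t)-g(\tau)]^{n-1}\,D_{\tau}^{n\beta}f(\tau)\,d_{\beta}\tau$. Using the reduction formula I write $[g(t)-g(\tau)]^{n-1}=-\frac{1}{\beta n}\,D_{\tau}^{\beta}\left\{[g(t)-g(\tau)]^{n}\right\}$ and integrate by parts with $v=[g(t)-g(\tau)]^{n}$ and $u=D_{\tau}^{n\beta}f(\tau)$. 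The boundary term vanishes at $\tau=t$ because $g(t)-g(t)=0$, and at $\tau=s$ it equals $-[g(t)-g(s)]^{n}\,D_{s}^{n\beta}f(s)$, while the surviving integral is $\int_{s}^{t}[g(t)-g(\tau)]^{n}\,D_{\tau}^{(n+1)\beta}f(\tau)\,d_{\beta}\tau$. Since $\frac{\beta^{-(n-1)}}{(n-1)!}\cdot\frac{1}{\beta n}=\frac{\beta^{-n}}{n!}$, the boundary contribution becomes exactly the $k=n$ term of the sum and the surviving integral becomes exactly the remainder $R_{n}$ of (\ref{1}), closing the induction.

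The only genuinely delicate point is the exponent-and-constant bookkeeping: both the integration-by-parts rule and the power reduction formula work precisely because the factor $\left(\cdot+\frac{1}{\Gamma(\beta)}\right)^{1-\beta}$ generated by $D^{\beta}$ cancels the factor $\left(\cdot+\frac{1}{\Gamma(\beta)}\right)^{\beta-1}$ in $d_{\beta}\tau$, and the powers of $\beta$ must be tracked accurately through every iteration, with $D^{k\beta}$ read as the $k$-fold composition. A clean way to see all of this simultaneously, which I would use as an independent cross-check, is the substitution $u=g(x)$: it converts $D^{\beta}$ into $\beta\,\frac{d}{du}$ (hence $D^{k\beta}$ into $\beta^{k}\frac{d^{k}}{du^{k}}$) and $d_{\beta}\tau$ into $\frac{1}{\beta}\,du$, so that (\ref{1}) is nothing but the classical Taylor theorem with integral remainder applied to $f\circ g^{-1}$ and transcribed back to the $x$-variable.
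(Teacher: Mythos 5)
Your proof is correct, and its computational engine is the same one the paper uses: a single integration by parts that trades the kernel $[g(t)-g(\tau)]^{n}$ for $[g(t)-g(s)]^{n}$ at the boundary while raising the order of the $\beta$-derivative on $f$. The difference is organizational, and in your favor. The paper starts from the remainder of order $n$ and unwinds it \emph{downward} by repeated integration by parts, with the crucial middle stage compressed into the phrase ``if we continue integrating by this way''; you instead run a genuine induction \emph{upward} on $n$, with the base case $n=0$ being the fundamental theorem $\int_{s}^{t}D_{\tau}^{\beta}f\,d_{\beta}\tau=f(t)-f(s)$ and each inductive step a single, fully justified integration by parts. This formalizes exactly the step the paper waves at. You also isolate the two facts that make everything work --- the reduction formula $D_{\tau}^{\beta}\{[g(t)-g(\tau)]^{k}\}=-\beta k[g(t)-g(\tau)]^{k-1}$ and the cancellation of the weight $\left(\tau+\frac{1}{\Gamma(\beta)}\right)^{1-\beta}$ produced by $D_{\tau}^{\beta}$ against the weight $\left(\tau+\frac{1}{\Gamma(\beta)}\right)^{\beta-1}$ inside $d_{\beta}\tau$ --- which the paper uses implicitly but never states. (Note that you correctly derived $D^{\beta}f(x)=\left(x+\frac{1}{\Gamma(\beta)}\right)^{1-\beta}f'(x)$ from the limit definition; the relation displayed in the paper's introduction carries the exponent $\beta-1$, which is a typo, and your cancellation argument would fail with that sign of the exponent.) Finally, your closing observation --- that the substitution $u=g(x)$ turns $D^{\beta}$ into $\beta\frac{d}{du}$ and $d_{\beta}\tau$ into $\frac{1}{\beta}du$, so that (\ref{1}) is literally classical Taylor's theorem for $f\circ g^{-1}$ --- is a genuinely different and more illuminating proof than either iteration, and it explains at a glance why all the powers of $\beta$ and the kernels $[g(t)-g(\tau)]^{k}$ come out as they do; the paper contains nothing of this kind.
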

\begin{proof}
Using integraton by parts, we have%
\begin{eqnarray*}
&&\frac{\beta ^{-n}}{n!}\int_{s}^{t}\left[ \left( t+\frac{1}{\Gamma \left(
\beta \right) }\right) ^{\beta }-\left( \tau +\frac{1}{\Gamma \left( \beta
\right) }\right) ^{\beta }\right] ^{n}D_{\tau }^{\left( n+1\right) \beta
}f\left( \tau \right) d_{\beta }\tau \\
&=&-\frac{\beta ^{-n}}{n!}\left[ \left( t+\frac{1}{\Gamma \left( \beta
\right) }\right) ^{\beta }-\left( s+\frac{1}{\Gamma \left( \beta \right) }%
\right) ^{\beta }\right] ^{n}D_{s}^{n\beta }f\left( s\right) \\
&&+\frac{\beta ^{1-n}}{\left( n-1\right) !}\int_{s}^{t}\left[ \left( t+\frac{%
1}{\Gamma \left( \beta \right) }\right) ^{\beta }-\left( \tau +\frac{1}{%
\Gamma \left( \beta \right) }\right) ^{\beta }\right] ^{n-1}D_{\tau
}^{n\beta }f\left( \tau \right) d_{\beta }\tau .
\end{eqnarray*}%
By the same way, integrating the second part of the right of equality, we
obtain%
\begin{eqnarray*}
&&\frac{\beta ^{1-n}}{\left( n-1\right) !}\int_{s}^{t}\left[ \left( t+\frac{1%
}{\Gamma \left( \beta \right) }\right) ^{\beta }-\left( \tau +\frac{1}{%
\Gamma \left( \beta \right) }\right) ^{\beta }\right] ^{n-1}D_{\tau
}^{n\beta }f\left( \tau \right) d_{\beta }\tau \\
&=&-\frac{\beta ^{1-n}}{\left( n-1\right) !}\left[ \left( t+\frac{1}{\Gamma
\left( \beta \right) }\right) ^{\beta }-\left( s+\frac{1}{\Gamma \left(
\beta \right) }\right) ^{\beta }\right] ^{n-1}D_{s}^{\left( n-1\right) \beta
}f\left( s\right)  \\
&&+\frac{\beta ^{2-n}}{\left( n-2\right) !}\int_{s}^{t}\left[ \left( t+\frac{%
1}{\Gamma \left( \beta \right) }\right) ^{\beta }-\left( \tau +\frac{1}{%
\Gamma \left( \beta \right) }\right) ^{\beta }\right] ^{n-2}D_{\tau
}^{\left( n-1\right) \beta }f\left( \tau \right) d_{\beta }\tau .  \
\end{eqnarray*}%
If we continue integrating by this way, we have%
\begin{eqnarray*}
&&\frac{\beta ^{-n}}{n!}\int_{s}^{t}\left[ \left( t+\frac{1}{\Gamma \left(
\beta \right) }\right) ^{\beta }-\left( \tau +\frac{1}{\Gamma \left( \beta
\right) }\right) ^{\beta }\right] ^{n}D_{\tau }^{\left( n+1\right) \beta
}f\left( \tau \right) d_{\beta }\tau  \\
&=&-\frac{\beta ^{-n}}{n!}\left[ \left( t+\frac{1}{\Gamma \left( \beta
\right) }\right) ^{\beta }-\left( s+\frac{1}{\Gamma \left( \beta \right) }%
\right) ^{\beta }\right] ^{n}D_{s}^{n\beta }f\left( s\right)  \\
&&-\frac{\beta ^{1-n}}{\left( n-1\right) !}\left[ \left( t+\frac{1}{\Gamma
\left( \beta \right) }\right) ^{\beta }-\left( s+\frac{1}{\Gamma \left(
\beta \right) }\right) ^{\beta }\right] ^{n-1}D_{s}^{\left( n-1\right) \beta
}f\left( s\right) \\
&&+...-\frac{\beta ^{-1}}{n!}\left[ \left( t+\frac{1}{\Gamma \left( \beta
\right) }\right) ^{\beta }-\left( s+\frac{1}{\Gamma \left( \beta \right) }%
\right) ^{\beta }\right] D_{s}^{\beta }f(s)+\int_{s}^{t}D_{\tau }^{\beta
}f\left( \tau \right) d_{\beta }\tau \\
&=&-\frac{\beta ^{-n}}{n!}\left[ \left( t+\frac{1}{\Gamma \left( \beta
\right) }\right) ^{\beta }-\left( s+\frac{1}{\Gamma \left( \beta \right) }%
\right) ^{\beta }\right] ^{n}D_{s}^{n\beta }f\left( s\right)  \\
&&-\frac{\beta ^{1-n}}{\left( n-1\right) !}\left[ \left( t+\frac{1}{\Gamma
\left( \beta \right) }\right) ^{\beta }-\left( s+\frac{1}{\Gamma \left(
\beta \right) }\right) ^{\beta }\right] ^{n-1}D_{s}^{\left( n-1\right) \beta
}f\left( s\right) \\
&&+...-\frac{\beta ^{-1}}{n!}\left[ \left( t+\frac{1}{\Gamma \left( \beta
\right) }\right) ^{\beta }-\left( s+\frac{1}{\Gamma \left( \beta \right) }%
\right) ^{\beta }\right] D_{s}^{\beta }f(s)+f(t)-f(s).  \
\end{eqnarray*}%
Thus we prove the equality (\ref{1}). We call the integral in the last
inequality $\beta -$Taylor Remainder of the function $f$.
\end{proof}
\begin{definition}
\bigskip \label{Taylor Remainder}Let $\beta \in \left( 0,1\right] $ and the
function $f$ is $\left( n+1\right) $ times $\beta -fractional$
differentiable on $\left[ 0,\infty \right) .$ Using (\ref{1}), we define the
remainder function by
\begin{eqnarray*}
R_{n.f}\left( s,t\right) :=f\left( s\right) -\sum\limits_{k=0}^{n}\frac{%
\beta ^{-k}}{k!}\left[ \left( t+\frac{1}{\Gamma \left( \beta \right) }%
\right) ^{\beta }-\left( s+\frac{1}{\Gamma \left( \beta \right) }\right)
^{\beta }\right] ^{k}D_{s}^{k\beta }f\left( s\right)  \\
\end{eqnarray*}%
and
\begin{equation}
R_{n.f}\left( s,t\right) =\frac{\beta ^{-n}}{n!}\int_{s}^{t}\left[ \left( t+%
\frac{1}{\Gamma \left( \beta \right) }\right) ^{\beta }-\left( \tau +\frac{1%
}{\Gamma \left( \beta \right) }\right) ^{\beta }\right] ^{n}D_{\tau
}^{\left( n+1\right) \beta }f\left( \tau \right) d_{\beta }\tau ,  \label{3}
\end{equation}%
for $n>-1.$
\end{definition}

\begin{theorem}
\label{odt}\bigskip Let $f$ and $g$ are continuous on the closed interval $%
\left[ a,b\right] $ and also $g\geq 0.$ Then there exists a point $c\in %
\left[ a,b\right] $ where
\begin{eqnarray*}
\int\limits_{a}^{b}f\left( t\right) g\left( t\right) d_{\beta
}t=f\left( c\right) \int\limits_{a}^{b}g\left( t\right) d_{\beta }t. \
\end{eqnarray*}
\end{theorem}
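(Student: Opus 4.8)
The plan is to reduce this statement to the classical first mean value theorem for integrals by exploiting the fact that, according to Definition~3, the $\beta$-integral is nothing but an ordinary Riemann integral against a fixed positive weight. Writing $w(t)=\left(t+\frac{1}{\Gamma(\beta)}\right)^{\beta-1}$, we have $\int_a^b h(t)\,d_\beta t=\int_a^b w(t)h(t)\,dt$ for every continuous $h$. Since $\beta\in(0,1]$ and $0\le a<b$, the quantity $t+\frac{1}{\Gamma(\beta)}$ is strictly positive on $[a,b]$, hence $w(t)>0$ there. Consequently the hypothesis $g\ge 0$ transfers to the genuinely nonnegative continuous function $wg$, which is the key structural observation that makes the weight harmless.

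First I would invoke the extreme value theorem: since $f$ is continuous on the compact interval $[a,b]$, it attains a minimum $m$ and a maximum $M$, so $m\le f(t)\le M$ for all $t\in[a,b]$. Multiplying through by the nonnegative quantity $w(t)g(t)$ and integrating preserves the inequalities, giving
\[
 m\int_a^b g(t)\,d_\beta t\;\le\;\int_a^b f(t)g(t)\,d_\beta t\;\le\;M\int_a^b g(t)\,d_\beta t .
\]
The remaining task is to extract the point $c$ from this sandwich.

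I would then split into two cases according to the value of $\int_a^b g\,d_\beta t$. If this integral vanishes, then since $wg\ge 0$ is continuous it must be identically zero on $[a,b]$, whence $g\equiv 0$; both sides of the desired identity are then zero and any $c\in[a,b]$ works. Otherwise $\int_a^b g\,d_\beta t>0$, and dividing the sandwich by it shows that the ratio $\left(\int_a^b fg\,d_\beta t\right)/\left(\int_a^b g\,d_\beta t\right)$ lies in $[m,M]$. Applying the intermediate value theorem to the continuous function $f$, which attains both values $m$ and $M$, furnishes a point $c\in[a,b]$ at which $f(c)$ equals this ratio, and clearing the denominator yields the claim. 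The only points demanding care are the positivity of the weight $w$ and the degenerate case $\int_a^b g\,d_\beta t=0$; beyond these the argument is the standard one.
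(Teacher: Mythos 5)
Your proposal is correct and takes essentially the same route as the paper's own proof: sandwich $f$ between its minimum $m$ and maximum $M$, multiply by the weight $\left(t+\frac{1}{\Gamma(\beta)}\right)^{\beta-1}g(t)$, integrate, and apply the intermediate value theorem to produce $c$. The only difference is that you actually justify the degenerate case $\int_a^b g(t)\,d_\beta t=0$ (using positivity of the weight and continuity to conclude $g\equiv 0$, so both sides vanish), whereas the paper merely asserts that any $c$ works there; your version closes that small gap.
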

\begin{proof}
We define $m:={\min }f\left( t\right), M:={\max }f\left( t\right).$ So we have
\begin{equation}
m\leq f\left( t\right) \leq M.  \label{2}
\end{equation}%
Multiplying both sides of the inequality (\ref{2}) by the function $\left( t+%
\frac{1}{\Gamma \left( \beta \right) }\right) ^{\beta -1}g\left( t\right) $
and integrating over $\left( a,b\right) $ with respect to $t,$ we get
\begin{eqnarray*}
m\leq \frac{\int\limits_{a}^{b}f\left( t\right) g\left( t\right) d_{\beta }t%
}{\int\limits_{a}^{b}g\left( t\right) d_{\beta }t}\leq M  \\
\end{eqnarray*}%
where $\int\limits_{a}^{b}g\left( t\right) d_{\beta }t\neq 0.$ If $%
\int\limits_{a}^{b}g\left( t\right) d_{\beta }t=0,$ then we can
choose any point c. Since the function $f$ is continuous on $\left[
a,b\right] ,~f$ takes each value over $\left[ m,M\right] $ at least
once. So we have
\begin{eqnarray*}
f\left( c\right) =\frac{\int\limits_{a}^{b}f\left( t\right) g\left(
t\right) d_{\beta }t}{\int\limits_{a}^{b}g\left( t\right) d_{\beta
}t} \\
\end{eqnarray*}%
for some $c\in \left[ a,b\right] .$
\end{proof}

If we apply Theorem \ref{odt} to the $\beta -$Taylor Remainder (\ref{3}), we
have
\begin{eqnarray*}
R_{n.f}\left( s,t\right) &=&D_{t}^{\left( n+1\right) \beta }f\left( c\right)
\int_{s}^{t}\frac{\beta ^{-n}}{n!}\left[ \left( t+\frac{1}{\Gamma \left(
\beta \right) }\right) ^{\beta }-\left( \tau +\frac{1}{\Gamma \left( \beta
\right) }\right) ^{\beta }\right] ^{n}d_{\beta }\tau \\
&=&D_{t}^{\left( n+1\right) \beta }f\left( c\right) \frac{\beta ^{-n}}{n!}%
\left[ \left( t+\frac{1}{\Gamma \left( \beta \right) }\right) ^{\beta
}-\left( s+\frac{1}{\Gamma \left( \beta \right) }\right) ^{\beta }\right]
^{n+1}. \
\end{eqnarray*}%
We call this form of remainder $\beta -$Lagrange Remainder.

\begin{lemma}
\bigskip Let $\beta \in \left( 0,1\right] $ and the function $f$ is $\left(
n+1\right) $ times $\beta -fractional$ differentiable on $\left[ 0,\infty
\right) $. If $\beta -$Taylor's remainder is defined as in (\ref{3}), then
the following inequality holds.%
\begin{eqnarray*}
&&\int_{a}^{b}\frac{\beta ^{-n-1}}{\left( n+1\right) !}\left[ \left( t+\frac{%
1}{\Gamma \left( \beta \right) }\right) ^{\beta }-\left( \tau +\frac{1}{%
\Gamma \left( \beta \right) }\right) ^{\beta }\right] ^{n+1}D_{\tau
}^{\left( n+1\right) \beta }f\left( \tau \right) d_{\beta }\tau   \\
\end{eqnarray*}
\begin{eqnarray}
&=&\int_{a}^{t}R_{n.f}\left( a,\tau \right) d_{\beta }\tau
+\int_{t}^{b}R_{n.f}\left( b,\tau \right) d_{\beta }\tau .  \label{4}
\end{eqnarray}
\end{lemma}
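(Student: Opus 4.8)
The plan is to unfold both remainder terms on the right-hand side of (\ref{4}) using the integral representation (\ref{3}), to interchange the order of the two $\beta$-integrations in each resulting double integral via Fubini's theorem, and then to evaluate the inner integral in the outer variable explicitly. Throughout I assume $a\le t\le b$, which is what lets the two pieces $\int_a^t$ and $\int_t^b$ combine. Using a fresh dummy variable $\sigma$ for the inner integral, (\ref{3}) gives
\begin{eqnarray*}
R_{n.f}\left( a,\tau \right) =\frac{\beta ^{-n}}{n!}\int_{a}^{\tau }\left[
\left( \tau +\frac{1}{\Gamma \left( \beta \right) }\right) ^{\beta }-\left(
\sigma +\frac{1}{\Gamma \left( \beta \right) }\right) ^{\beta }\right]
^{n}D_{\sigma }^{\left( n+1\right) \beta }f\left( \sigma \right) d_{\beta
}\sigma ,
\end{eqnarray*}
and the same expression with $a$ replaced by $b$ for $R_{n.f}(b,\tau)$.

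First I would substitute these into the right-hand side of (\ref{4}). The first term is a double integral over $\{a\le\sigma\le\tau\le t\}$, which Fubini rewrites as $a\le\sigma\le t$ with $\sigma\le\tau\le t$; the second, after using $\int_b^\tau=-\int_\tau^b$, is a double integral over $\{t\le\tau\le\sigma\le b\}$, rewritten as $t\le\sigma\le b$ with $t\le\tau\le\sigma$. In both cases this pulls the factor $D_{\sigma}^{\left( n+1\right) \beta}f(\sigma)$ outside and leaves an inner $\beta$-integral in $\tau$ of a pure power.

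The key computation is this inner integral, and the substitution $y=\left(\tau+\frac{1}{\Gamma\left(\beta\right)}\right)^{\beta}$ does it cleanly: since $dy=\beta\left(\tau+\frac{1}{\Gamma\left(\beta\right)}\right)^{\beta-1}d\tau=\beta\,d_{\beta}\tau$, the $\beta$-measure becomes ordinary Lebesgue measure, and
\begin{eqnarray*}
\int_{\sigma }^{t}\left[ \left( \tau +\frac{1}{\Gamma \left( \beta \right) }
\right) ^{\beta }-\left( \sigma +\frac{1}{\Gamma \left( \beta \right) }
\right) ^{\beta }\right] ^{n}d_{\beta }\tau =\frac{1}{\beta \left(
n+1\right) }\left[ \left( t+\frac{1}{\Gamma \left( \beta \right) }\right)
^{\beta }-\left( \sigma +\frac{1}{\Gamma \left( \beta \right) }\right)
^{\beta }\right] ^{n+1}.
\end{eqnarray*}
With the identity $\frac{\beta^{-n}}{n!}\cdot\frac{1}{\beta(n+1)}=\frac{\beta^{-n-1}}{(n+1)!}$, the first term then equals $\frac{\beta^{-n-1}}{(n+1)!}\int_a^t[\,\cdot\,]^{n+1}D_{\sigma}^{\left( n+1\right) \beta}f(\sigma)\,d_{\beta}\sigma$ with the bracket $\left(t+\frac{1}{\Gamma(\beta)}\right)^{\beta}-\left(\sigma+\frac{1}{\Gamma(\beta)}\right)^{\beta}$, which already matches the integrand of the left-hand side.

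The one place to be careful is the second term. There the inner integral runs $\int_t^\sigma$, which by the same substitution evaluates to $-\frac{1}{\beta(n+1)}[\,\cdot\,]^{n+1}$; this minus sign cancels against the minus sign from $\int_b^\tau=-\int_\tau^b$, so the second term again comes out as $+\frac{\beta^{-n-1}}{(n+1)!}\int_t^b[\,\cdot\,]^{n+1}D_{\sigma}^{\left( n+1\right) \beta}f(\sigma)\,d_{\beta}\sigma$ with the identical bracket. Adding the $\int_a^t$ and $\int_t^b$ contributions merges them into $\int_a^b$, which is precisely the left-hand side of (\ref{4}) after renaming $\sigma$ back to $\tau$. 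The main obstacle is therefore purely bookkeeping: keeping the two dummy variables apart, setting up the Fubini regions correctly, and tracking the sign cancellation in the $\int_t^b$ piece so that both contributions carry the same positive coefficient.
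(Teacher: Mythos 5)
Your proof is correct, but it takes a genuinely different route from the paper. The paper proves (\ref{4}) by induction on $n$: the base case $n=-1$ is immediate, and the inductive step applies integration by parts to the left-hand side, producing the $n=k-1$ integral (handled by the induction hypothesis) plus two boundary terms, which are then absorbed using the recursion $R_{k,f}(s,\tau)=R_{k-1,f}(s,\tau)-\frac{\beta^{-k}}{k!}\bigl[\bigl(\tau+\frac{1}{\Gamma(\beta)}\bigr)^{\beta}-\bigl(s+\frac{1}{\Gamma(\beta)}\bigr)^{\beta}\bigr]^{k}D^{k\beta}f(s)$ together with the explicit evaluation of $\int\bigl[\bigl(\tau+\frac{1}{\Gamma(\beta)}\bigr)^{\beta}-\bigl(s+\frac{1}{\Gamma(\beta)}\bigr)^{\beta}\bigr]^{k}d_{\beta}\tau$. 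You instead unfold both remainders via (\ref{3}), swap the order of the two $\beta$-integrations by Fubini, and evaluate the inner integral in closed form with the substitution $y=\bigl(\tau+\frac{1}{\Gamma(\beta)}\bigr)^{\beta}$; your sign-tracking in the $\int_t^b$ piece is right, and the coefficient identity $\frac{\beta^{-n}}{n!}\cdot\frac{1}{\beta(n+1)}=\frac{\beta^{-n-1}}{(n+1)!}$ checks out. Your approach buys a one-shot, non-inductive proof with no boundary-term bookkeeping, at the modest cost of having to justify the interchange of integrals (harmless here, since the integrand is continuous, or at least $\beta$-fractional integrable, on a compact triangular region) and of stating explicitly the hypothesis $a\le t\le b$, which the paper leaves implicit; the paper's induction stays closer to the recursive structure of the Taylor remainder and mirrors the integration-by-parts proof of Theorem~\ref{Taylor Formula} itself, but it is longer and its bookkeeping of the boundary terms is more delicate.
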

\begin{proof}
We use mathematical induction to prove (\ref{4}). For $n=-1,$ we get
\begin{eqnarray*}
&&\int_{a}^{b}\left[ \left( t+\frac{1}{\Gamma \left( \beta \right) }\right)
^{\beta }-\left( \tau +\frac{1}{\Gamma \left( \beta \right) }\right) ^{\beta
}\right] ^{n+1}D_{\tau }^{\left( n+1\right) \beta }f\left( \tau \right)
d_{\beta }\tau  \\
&=&\int_{a}^{b}f\left( \tau \right) d_{\beta }\tau =\int_{a}^{t}f\left( \tau
\right) d_{\beta }\tau +\int_{t}^{b}f\left( \tau \right) d_{\beta }\tau . \
\end{eqnarray*}%
Assume that (\ref{4}) holds for $n=k-1,$
\begin{eqnarray*}
&&\int_{a}^{b}\frac{\beta ^{-k}}{k!}\left[ \left( t+\frac{1}{\Gamma \left(
\beta \right) }\right) ^{\beta }-\left( \tau +\frac{1}{\Gamma \left( \beta
\right) }\right) ^{\beta }\right] ^{k}D_{\tau }^{k\beta }f\left( \tau
\right) d_{\beta }\tau  \\
&=&\int_{a}^{t}R_{k-1.f}\left( a,\tau \right) d_{\beta }\tau
+\int_{t}^{b}R_{k-1.f}\left( b,\tau \right) d_{\beta }\tau . \
\end{eqnarray*}%
Let $n=k.$ Using the integration by parts, we obtain
\begin{eqnarray*}
&&\int_{a}^{b}\frac{\beta ^{-k-1}}{\left( k+1\right) !}\left[ \left( t+\frac{%
1}{\Gamma \left( \beta \right) }\right) ^{\beta }-\left( \tau +\frac{1}{%
\Gamma \left( \beta \right) }\right) ^{\beta }\right] ^{k+1}D_{\tau
}^{\left( k+1\right) \beta }f\left( \tau \right) d_{\beta }\tau  \label{5}   \\
&=&\frac{\beta ^{-k-1}}{\left( k+1\right) !}\left\{ \left[ \left( t+\frac{1}{%
\Gamma \left( \beta \right) }\right) ^{\beta }-\left( b+\frac{1}{\Gamma
\left( \beta \right) }\right) ^{\beta }\right] ^{k+1}D_{\tau }^{k\beta
}f\left( b\right) \right.   \\
&&-\left. \left[ \left( t+\frac{1}{\Gamma \left( \beta \right) }\right)
^{\beta }-\left( a+\frac{1}{\Gamma \left( \beta \right) }\right) ^{\beta }%
\right] ^{k+1}D_{\tau }^{k\beta }f\left( a\right) \right\}   \\
&&+\int_{a}^{b}\frac{\beta ^{-k}}{k!}\left[ \left( t+\frac{1}{\Gamma \left(
\beta \right) }\right) ^{\beta }-\left( \tau +\frac{1}{\Gamma \left( \beta
\right) }\right) ^{\beta }\right] ^{k}D_{\tau }^{k\beta }f\left( \tau
\right) d_{\beta }\tau .   \\
&=&\frac{\beta ^{-k-1}}{\left( k+1\right) !}\left\{ \left[ \left( t+\frac{1}{%
\Gamma \left( \beta \right) }\right) ^{\beta }-\left( b+\frac{1}{\Gamma
\left( \beta \right) }\right) ^{\beta }\right] ^{k+1}D_{\tau }^{k\beta
}f\left( b\right) \right.   \\
&&-\left. \left[ \left( t+\frac{1}{\Gamma \left( \beta \right) }\right)
^{\beta }-\left( a+\frac{1}{\Gamma \left( \beta \right) }\right) ^{\beta }%
\right] ^{k+1}D_{\tau }^{k\beta }f\left( a\right) \right\}   \\
&&+\int_{a}^{t}R_{k-1.f}\left( a,\tau \right) d_{\beta }\tau
+\int_{t}^{b}R_{k-1.f}\left( b,\tau \right) d_{\beta }\tau .
\end{eqnarray*}%
Since
\begin{eqnarray*}
&&\frac{\beta ^{-k}}{k!}D_{\tau }^{k\beta }f\left( b\right) \int_{b}^{t}%
\left[ \left( \tau +\frac{1}{\Gamma \left( \beta \right) }\right) ^{\beta
}-\left( b+\frac{1}{\Gamma \left( \beta \right) }\right) ^{\beta }\right]
^{k}d_{\beta }\tau   \\
&=&\frac{\beta ^{-k-1}}{\left( k+1\right) !}D_{\tau }^{k\beta }f\left(
b\right) \left[ \left( t+\frac{1}{\Gamma \left( \beta \right) }\right)
^{\beta }-\left( b+\frac{1}{\Gamma \left( \beta \right) }\right) ^{\beta }%
\right] ^{k+1}   \
\end{eqnarray*}%
and
\begin{eqnarray*}
&&\frac{\beta ^{-k}}{k!}D_{\tau }^{k\beta }f\left( a\right) \int_{a}^{t}%
\left[ \left( \tau +\frac{1}{\Gamma \left( \beta \right) }\right) ^{\beta
}-\left( a+\frac{1}{\Gamma \left( \beta \right) }\right) ^{\beta }\right]
^{k}d_{\beta }\tau   \\
&=&\frac{\beta ^{-k-1}}{\left( k+1\right) !}D_{\tau }^{k\beta }f\left(
a\right) \left[ \left( t+\frac{1}{\Gamma \left( \beta \right) }\right)
^{\beta }-\left( a+\frac{1}{\Gamma \left( \beta \right) }\right) ^{\beta }%
\right] ^{k+1}.   \
\end{eqnarray*}%
Thus, we obtain
\begin{eqnarray*}
&&\int_{a}^{b}\frac{\beta ^{-k-1}}{\left( k+1\right) !}\left[ \left( t+\frac{%
1}{\Gamma \left( \beta \right) }\right) ^{\beta }-\left( \tau +\frac{1}{%
\Gamma \left( \beta \right) }\right) ^{\beta }\right] ^{k+1}D_{\tau
}^{\left( k+1\right) \beta }f\left( \tau \right) d_{\beta }\tau  \\
&=&\int_{a}^{t}R_{k-1.f}\left( a,\tau \right) d_{\beta }\tau
+\int_{t}^{b}R_{k-1.f}\left( b,\tau \right) d_{\beta }\tau  \\
&&+\frac{\beta ^{-k}}{k!}D_{\tau }^{k\beta }f\left( b\right) \int_{b}^{t}%
\left[ \left( \tau +\frac{1}{\Gamma \left( \beta \right) }\right) ^{\beta
}-\left( b+\frac{1}{\Gamma \left( \beta \right) }\right) ^{\beta }\right]
^{k}d_{\beta }\tau  \\
&&-\frac{\beta ^{-k}}{k!}D_{\tau }^{k\beta }f\left( a\right) \int_{a}^{t}%
\left[ \left( \tau +\frac{1}{\Gamma \left( \beta \right) }\right) ^{\beta
}-\left( a+\frac{1}{\Gamma \left( \beta \right) }\right) ^{\beta }\right]
^{k}d_{\beta }\tau  \\
&=&\int_{a}^{t}\left\{ R_{k-1.f}\left( a,\tau \right) -\frac{\beta ^{-k}}{k!}%
D_{\tau }^{k\beta }f\left( a\right) \left[ \left( \tau +\frac{1}{\Gamma
\left( \beta \right) }\right) ^{\beta }-\left( a+\frac{1}{\Gamma \left(
\beta \right) }\right) ^{\beta }\right] ^{k}\right\} d_{\beta }\tau  \\
&&+\int_{t}^{b}\left\{ R_{k-1.f}\left( b,\tau \right) -\frac{\beta ^{-k}}{k!}%
D_{\tau }^{k\beta }f\left( b\right) \left[ \left( \tau +\frac{1}{\Gamma
\left( \beta \right) }\right) ^{\beta }-\left( b+\frac{1}{\Gamma \left(
\beta \right) }\right) ^{\beta }\right] ^{k}\right\} d_{\beta }\tau . \
\end{eqnarray*}%
This completes the proof.
\end{proof}

\begin{corollary}
\bigskip \bigskip \label{corollary2}Let $\beta \in \left( 0,1\right] .$ We
have%
\begin{eqnarray*}
&&\int_{a}^{b}\frac{\beta ^{-n-1}}{\left( n+1\right) !}\left[ \left( a+\frac{%
1}{\Gamma \left( \beta \right) }\right) ^{\beta }-\left( \tau +\frac{1}{%
\Gamma \left( \beta \right) }\right) ^{\beta }\right] ^{n+1}D_{\tau
}^{\left( n+1\right) \beta }f\left( \tau \right) d_{\beta }\tau  \\
&=&\int_{a}^{b}R_{n.f}\left( b,\tau \right) d_{\beta }\tau  \\
&&\int_{a}^{b}\frac{\beta ^{-n-1}}{\left( n+1\right) !}\left[ \left( b+\frac{%
1}{\Gamma \left( \beta \right) }\right) ^{\beta }-\left( \tau +\frac{1}{%
\Gamma \left( \beta \right) }\right) ^{\beta }\right] ^{n+1}D_{\tau
}^{\left( n+1\right) \beta }f\left( \tau \right) d_{\beta }\tau  \\
&=&\int_{a}^{b}R_{n.f}\left( a,\tau \right) d_{\beta }\tau .  \
\end{eqnarray*}
\end{corollary}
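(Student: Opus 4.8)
The plan is to obtain both identities as immediate specializations of the Lemma's identity (\ref{4}), which is stated for an arbitrary parameter $t\in\left[ 0,\infty \right) $. The key observation is that the left-hand side of (\ref{4}) is exactly the kernel appearing in the corollary, with the point $t$ playing the role of $a$ in the first assertion and $b$ in the second. So rather than running a fresh computation, I would simply substitute these two values of $t$ and track which boundary term collapses.

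For the first equality I would set $t=a$ in (\ref{4}). The left-hand side then reads
$$\int_{a}^{b}\frac{\beta ^{-n-1}}{\left( n+1\right) !}\left[ \left( a+\tfrac{1}{\Gamma \left( \beta \right) }\right) ^{\beta }-\left( \tau +\tfrac{1}{\Gamma \left( \beta \right) }\right) ^{\beta }\right] ^{n+1}D_{\tau }^{\left( n+1\right) \beta }f\left( \tau \right) d_{\beta }\tau ,$$
which is precisely the integrand of the first assertion. On the right-hand side the first term becomes $\int_{a}^{a}R_{n.f}\left( a,\tau \right) d_{\beta }\tau $, and this vanishes because a $\beta $-integral over a degenerate interval is zero; by Definition~3, $\int_{a}^{a}\left( \tau +\tfrac{1}{\Gamma \left( \beta \right) }\right) ^{\beta -1}R_{n.f}\left( a,\tau \right) d\tau =0$. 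What remains is $\int_{a}^{b}R_{n.f}\left( b,\tau \right) d_{\beta }\tau $, establishing the first identity.

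For the second equality I would set $t=b$ in (\ref{4}). Now the left-hand side carries the factor $\left( b+\tfrac{1}{\Gamma \left( \beta \right) }\right) ^{\beta }$, matching the second assertion, while on the right-hand side it is the second term $\int_{b}^{b}R_{n.f}\left( b,\tau \right) d_{\beta }\tau $ that collapses to zero by the same reasoning. This leaves $\int_{a}^{b}R_{n.f}\left( a,\tau \right) d_{\beta }\tau $, which is the claim. I do not anticipate any genuine obstacle here: the entire content of the corollary is packaged inside the Lemma, and the only thing requiring a word of justification is that the $\beta $-integral over a point is zero, which is immediate from the definition of $d_{\beta }\tau $. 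The proof is therefore a two-line specialization of (\ref{4}).
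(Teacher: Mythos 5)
Your proposal is correct and is exactly the intended derivation: the paper gives no separate proof for this corollary, presenting it as an immediate consequence of the Lemma, and the specializations $t=a$ and $t=b$ in (\ref{4}) — with the degenerate integrals $\int_{a}^{a}$ and $\int_{b}^{b}$ vanishing — are precisely what makes it follow. Nothing is missing.
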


\subsection{Steffensen Inequality}
\label{sec:2}
We prove a new $\beta -fractional$ version of Steffensen inequality
and of Hayashi's inequality. We need the folllowing lemma to prove our
results.

\begin{lemma}
\bigskip \bigskip Let $\beta \in \left( 0,1\right] $ and $a,~b\in \mathbb{R}$
with $0\leq a<b.$ We assume $M>0$ and $f:\left[ a,b\right] \rightarrow \left[
0,M\right] $ be an $\beta -fractional$ integrable function on $\left[ a,b%
\right] .$ Then the inequalities%
\begin{equation}
\int_{b-l}^{b}Md_{\beta }t\leq \int_{a}^{b}f\left( t\right) d_{\beta }t\leq
\int_{a}^{a+l}Md_{\beta }t  \label{6}
\end{equation}%
hold where%
\begin{equation}
l:=\frac{\beta \left( b-a\right) }{M\left[ \left( b+\frac{1}{\Gamma \left(
\beta \right) }\right) ^{\beta }-\left( a+\frac{1}{\Gamma \left( \beta
\right) }\right) ^{\beta }\right] }\int_{a}^{b}f\left( t\right) d_{\beta }t,%
 t\in \left[ 0,b-a\right] .  \label{7}
\end{equation}
\end{lemma}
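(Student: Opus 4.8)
The plan is to read the $\beta$-integral as an ordinary integral against the weight $w(t)=\left(t+\frac{1}{\Gamma(\beta)}\right)^{\beta-1}$, which for $\beta\in(0,1]$ is positive and \emph{nonincreasing} on $[a,b]$; the whole argument is a rearrangement of mass exploiting this monotonicity. First I would check that the statement is well posed by bounding $l$. Writing $\Delta=\left(b+\frac{1}{\Gamma(\beta)}\right)^{\beta}-\left(a+\frac{1}{\Gamma(\beta)}\right)^{\beta}$, the hypothesis $0\le f\le M$ gives
\[
0\le \int_a^b f\,d_\beta t\le \int_a^b M\,d_\beta t=\frac{M}{\beta}\,\Delta,
\]
and dividing by $M\Delta/\beta$ in the definition (\ref{7}) yields $0\le l\le b-a$. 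Hence $b-l$ and $a+l$ lie in $[a,b]$ and both outer integrals in (\ref{6}) are meaningful.

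For the right-hand inequality I would split $\int_a^b f\,d_\beta t$ at the point $a+l$ and compare:
\[
\int_a^{a+l}M\,d_\beta t-\int_a^b f\,d_\beta t=\int_a^{a+l}\left(M-f\right)d_\beta t-\int_{a+l}^b f\,d_\beta t.
\]
On $[a,a+l]$ the integrand $M-f$ is nonnegative and $w(t)\ge w(a+l)$, while on $[a+l,b]$ the integrand $f$ is nonnegative and $w(t)\le w(a+l)$; pulling out the factor $w(a+l)$ in both places bounds the right side from below by $w(a+l)\left[\int_a^{a+l}(M-f)\,dt-\int_{a+l}^b f\,dt\right]=w(a+l)\left[Ml-\int_a^b f(t)\,dt\right]$. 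The left-hand inequality follows from the mirror-image computation, splitting $\int_a^b f\,d_\beta t$ at $b-l$, which produces the lower bound $w(b-l)\left[\int_a^b f(t)\,dt-Ml\right]$.

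Both reductions therefore hinge on the sign of the \emph{ordinary} quantity $Ml-\int_a^b f(t)\,dt$, and this is the step I expect to be the crux. Using (\ref{7}) together with $\Delta=\beta\int_a^b w(t)\,dt$ gives $Ml=\frac{1}{\bar w}\int_a^b f\,d_\beta t=\frac{1}{\bar w}\int_a^b f(t)w(t)\,dt$, where $\bar w$ is the mean value of $w$ on $[a,b]$, so the balance condition becomes the Chebyshev-type comparison of $\int_a^b fw\,dt$ with $\bar w\int_a^b f\,dt$. The delicate point, and the main obstacle, is that the two halves of the lemma require \emph{opposite} signs for $Ml-\int_a^b f\,dt$; the proof must show that $l$ is calibrated so that this quantity vanishes, i.e.\ that $l$ is precisely the length for which the constant-$M$ $\beta$-mass equals $\int_a^b f\,d_\beta t$, after which both bracketed terms are zero and (\ref{6}) is immediate. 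A cleaner route that settles the calibration automatically is to substitute $\phi(t)=\frac{1}{\beta}\left(t+\frac{1}{\Gamma(\beta)}\right)^{\beta}$, under which $d_\beta t=d\phi$ and $f$ becomes a map $F\colon[\phi(a),\phi(b)]\to[0,M]$; then (\ref{6}) turns into the classical constant-comparison (Hayashi/Steffensen) lemma for an ordinary integral, and the only residual task is to transport the Steffensen length back through $\phi^{-1}$ and reconcile it with the expression (\ref{7}).
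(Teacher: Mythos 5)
Your rearrangement argument correctly reduces each half of (\ref{6}) to the sign of $Ml-\int_a^b f(t)\,dt$, and you are right that this is the crux --- but your resolution of the crux is wrong. The definition (\ref{7}) does \emph{not} calibrate $l$ so that $Ml=\int_a^b f(t)\,dt$. Writing $w(t)=\left(t+\frac{1}{\Gamma(\beta)}\right)^{\beta-1}$ and using $\int_a^b d_\beta t=\frac{1}{\beta}\left[\left(b+\frac{1}{\Gamma(\beta)}\right)^{\beta}-\left(a+\frac{1}{\Gamma(\beta)}\right)^{\beta}\right]$, what (\ref{7}) actually says is
\[
Ml=(b-a)\,\frac{\int_a^b f(t)\,d_\beta t}{\int_a^b d_\beta t}
=(b-a)\,\frac{\int_a^b f(t)\,w(t)\,dt}{\int_a^b w(t)\,dt},
\]
i.e.\ $Ml$ is $(b-a)$ times the \emph{weighted} average of $f$, not the unweighted one; these differ whenever $f$ correlates with the weight. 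Concretely, take $\beta=\tfrac12$, $[a,b]=[0,1]$, $M=1$, $f(t)=t$: since $w$ is strictly decreasing and $f$ strictly increasing, Chebyshev's integral inequality gives $\int_0^1 fw\,dt<\int_0^1 f\,dt\int_0^1 w\,dt$, hence $Ml<\int_0^1 f\,dt$ strictly. So the bracketed quantity is strictly negative, your lower bound for the right-hand inequality in (\ref{6}) is negative, and your claim that ``both bracketed terms are zero'' fails --- even though the lemma itself is true in this example (numerically $l\approx 0.458$, $\int_0^{l}d_\beta t\approx 0.520\ \ge\ \int_0^1 f\,d_\beta t\approx 0.458\ \ge\ \int_{1-l}^1 d_\beta t\approx 0.398$). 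Your reduction is therefore genuinely lossy and cannot be completed as stated.

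The calibration that (\ref{7}) really provides is a statement about $\beta$-integrals, not ordinary ones: $\frac{l}{b-a}\int_a^b M\,d_\beta t=\int_a^b f\,d_\beta t$. The paper's proof combines this with the monotonicity of $w$ in averaged form: since $w$ is nonincreasing,
\[
\frac{1}{l}\int_{b-l}^{b}d_\beta t\ \le\ \frac{1}{b-a}\int_a^b d_\beta t\ \le\ \frac{1}{l}\int_a^{a+l}d_\beta t,
\]
and multiplying through by $Ml$ makes the middle term exactly $\int_a^b f\,d_\beta t$, which is (\ref{6}). Your substitution route via $\phi(t)=\frac{1}{\beta}\left(t+\frac{1}{\Gamma(\beta)}\right)^{\beta}$ can be made to work, but the ``residual task'' you defer is the entire content: in the $u$-variable the statement is \emph{not} the classical constant-comparison lemma, because the transported lengths $\phi(a+l)-\phi(a)$ and $\phi(b)-\phi(b-l)$ differ from each other and from the classical Steffensen length $\frac{1}{M}\int F\,du$. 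What closes the gap is precisely the concavity of $\phi$ (equivalently, the displayed averaging inequality): $\phi(b)-\phi(b-l)\le\frac{l}{b-a}\left[\phi(b)-\phi(a)\right]\le\phi(a+l)-\phi(a)$ for $l\in[0,b-a]$. Prove that step and either route finishes; omit it and neither does.
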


\begin{proof}
Since $f\left( t\right) \in \left[ 0,M\right] $ for all $t\in \left[ a,b%
\right] ,$ using (\ref{7}) we have%
\begin{eqnarray*}
0 &\leq &l=\frac{\beta \left( b-a\right) }{M\left[ \left( b+\frac{1}{\Gamma
\left( \beta \right) }\right) ^{\beta }-\left( a+\frac{1}{\Gamma \left(
\beta \right) }\right) ^{\beta }\right] }\int_{a}^{b}f\left( t\right)
d_{\beta }t  \\
&\leq &\frac{\beta \left( b-a\right) }{\left( b+\frac{1}{\Gamma \left( \beta
\right) }\right) ^{\beta }-\left( a+\frac{1}{\Gamma \left( \beta \right) }%
\right) ^{\beta }}\int_{a}^{b}d_{\beta }t=b-a.  \
\end{eqnarray*}%
We can easily see that $\left( t+\frac{1}{\Gamma \left( \beta \right) }%
\right) ^{\beta -1}$ is a decreasing function on $\left[ a,b\right] $\ or $%
\left( a,b\right] $ for $a=0.$ Thus using the fact that $d_{\beta }t=\left( t+%
\frac{1}{\Gamma \left( \beta \right) }\right) ^{\beta -1},$ we obtain the
following inequalities
\begin{eqnarray*}
\frac{1}{l}\int_{b-l}^{b}d_{\beta }t\leq \frac{1}{b-a}\int_{a}^{b}d_{\beta
}t\leq \frac{1}{l}\int_{a}^{a+l}d_{\beta }t.  \\
\end{eqnarray*}%
So that using (\ref{7}), we have
\begin{eqnarray*}
\int_{b-l}^{b}Md_{\beta }t\leq \frac{l}{b-a}\int_{a}^{b}Md_{\beta }t\leq
\int_{a}^{a+l}Md_{\beta }t  \\
\end{eqnarray*}%
which ends the proof.
\end{proof}

We call the following theorem as Steffensen's inequality. If we
choose $M=1$ and $A>0$ for generality it is called Hayashi's inequality.
\begin{theorem}
\label{Fractional Hayashi-Steffensen Inequality}\bigskip Let $\beta \in
\left( 0,1\right] $ and $a,~b\in \mathbb{R}$ with $0\leq a<b$ and $M>0.$
Assume that the functions $f:\left[ a,b\right] \rightarrow \left[ 0,M\right]
$ and $g:\left[ a,b\right] \rightarrow \left[ 0,M\right] $\ be $\beta
-fractional$ integrable functions on $\left[ a,b\right] .$ If $f$ is
nonnegative and nonincreasing, then
\begin{equation}
M\int_{b-l}^{b}f\left( t\right) d_{\beta }t\leq \int_{a}^{b}f\left( t\right)
g\left( t\right) d_{\beta }t\leq M\int_{a}^{a+l}f\left( t\right) d_{\beta }t
\label{8}
\end{equation}%
where $l$ is defined in (\ref{7}).
\end{theorem}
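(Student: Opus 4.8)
The plan is to transplant the classical proof of Steffensen's inequality to the weighted setting, replacing Lebesgue measure by the strictly positive weight $d_\beta t=\left(t+\frac{1}{\Gamma(\beta)}\right)^{\beta-1}dt$ and feeding the preceding lemma the $[0,M]$-valued weight $g$ (so that $l$ in (\ref{7}) is to be read with $g$ in place of the generic bounded function, which is what makes this the Steffensen/Hayashi statement). First I would record what the lemma yields: applied to $g$, inequality (\ref{6}) becomes $\int_{b-l}^{b}M\,d_\beta t\le\int_a^b g(t)\,d_\beta t\le\int_a^{a+l}M\,d_\beta t$, while the opening estimate in its proof gives $0\le l\le b-a$, so that the split points $a+l$ and $b-l$ both lie in $[a,b]$ and every integral below is well defined. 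I would then prove the two halves of (\ref{8}) independently, each by showing that a suitable difference is nonnegative.

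For the right-hand inequality I would consider $D:=M\int_a^{a+l}f\,d_\beta t-\int_a^b fg\,d_\beta t$ and split the second integral at $a+l$, rewriting $D=\int_a^{a+l}f(M-g)\,d_\beta t-\int_{a+l}^b fg\,d_\beta t$. Since $f$ is nonincreasing, $f(t)\ge f(a+l)$ on $[a,a+l]$ and $f(t)\le f(a+l)$ on $[a+l,b]$; because $0\le g\le M$ and the weight is positive, one bounds $f(M-g)$ below by $f(a+l)(M-g)$ on the first interval and $fg$ above by $f(a+l)g$ on the second. Recombining the two $g$-integrals gives $D\ge f(a+l)\left[M\int_a^{a+l}d_\beta t-\int_a^b g\,d_\beta t\right]$. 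The bracket is nonnegative by the right inequality of (\ref{6}) for $g$, and $f(a+l)\ge 0$, so $D\ge 0$, which is exactly the right inequality in (\ref{8}).

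For the left-hand inequality I would argue symmetrically with $D':=\int_a^b fg\,d_\beta t-M\int_{b-l}^b f\,d_\beta t$, splitting the first integral at $b-l$ to obtain $D'=\int_a^{b-l}fg\,d_\beta t-\int_{b-l}^b f(M-g)\,d_\beta t$. Using $f(t)\ge f(b-l)$ on $[a,b-l]$ and $f(t)\le f(b-l)$ on $[b-l,b]$ together with $0\le g\le M$ and positivity of the weight, I would reduce to $D'\ge f(b-l)\left[\int_a^b g\,d_\beta t-M\int_{b-l}^b d_\beta t\right]$; the bracket is nonnegative by the left inequality of (\ref{6}) for $g$, and $f(b-l)\ge 0$, giving $D'\ge 0$.

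The routine parts are the two splittings and the monotonicity estimates; the one place demanding care, and the main conceptual obstacle, is the bookkeeping that makes each bracket collapse exactly onto one side of the lemma's measure comparison (\ref{6}) for $g$. This is precisely where the normalization of $l$ matters: $l$ must be built from $g$ (not $f$), so that $M\int_a^{a+l}d_\beta t$ and $M\int_{b-l}^b d_\beta t$ sandwich $\int_a^b g\,d_\beta t$. Once (\ref{7}) is read with $g$, every step of the form ``multiply by the nonnegative constant $f(a+l)$ (resp.\ $f(b-l)$) and integrate against the positive weight'' preserves the inequality direction, the two halves combine to give (\ref{8}), and specialising to $M=1$ recovers the classical Steffensen inequality and the stated Hayashi generalisation.
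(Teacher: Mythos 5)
Your proposal is correct and follows essentially the same route as the paper: the same splitting of $\int_a^b fg\,d_{\beta}t$ at $a+l$ (resp.\ $b-l$), the same monotonicity bounds $f(t)\gtrless f(a+l)$ (resp.\ $f(b-l)$) on the two subintervals, and the same appeal to the preceding lemma's inequality (\ref{6}) applied to $g$ (you merely apply both monotonicity estimates before invoking (\ref{6}), where the paper interleaves them, and you write out the right-hand half explicitly where the paper calls it ``similar''). Your explicit remark that $l$ in (\ref{7}) must be built from $g$, not $f$, matches the paper's intended reading and in fact clarifies a point the paper leaves implicit.
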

\begin{proof}
Assume that $f$ is nonnegative and nonincreasing, we first prove the left
side of the inequality. By the definition of $l$ in (\ref{7}) and the
conditions on function $g,$ we have inequality (\ref{6}). Using the left
hand side of the inequality (\ref{8}), we obtain
\begin{eqnarray*}
&&\int_{a}^{b}f\left( t\right) g\left( t\right) d_{\beta
}t-M\int_{b-l}^{b}f\left( t\right) d_{\beta }t  \\
&=&\int_{a}^{b-l}f\left( t\right) g\left( t\right) d_{\beta
}t+\int_{b-l}^{b}f\left( t\right) g\left( t\right) d_{\beta
}t-M\int_{b-l}^{b}f\left( t\right) d_{\beta }t  \\
&=&\int_{a}^{b-l}f\left( t\right) g\left( t\right) d_{\beta
}t-\int_{b-l}^{b}\left( M-g\left( t\right) \right) f\left( t\right) d_{\beta
}t.  \
\end{eqnarray*}%
Since $f$ is nonincreasing and for $t\in \left[ b-l,b\right] ,$ we have
\begin{equation}
f\left( t\right) \leq f\left( b-l\right)  \label{9}
\end{equation}%
Using the inequality (\ref{9}), we get
\begin{eqnarray*}
&&\int_{a}^{b-l}f\left( t\right) g\left( t\right) d_{\beta
}t-\int_{b-l}^{b}\left( M-g\left( t\right) \right) f\left( t\right) d_{\beta
}t  \\
&\geq &\int_{a}^{b-l}f\left( t\right) g\left( t\right) d_{\beta }t-f\left(
b-l\right) \int_{b-l}^{b}\left( M-g\left( t\right) \right) d_{\beta }t  \\
&\geq &\int_{a}^{b-l}f\left( t\right) g\left( t\right) d_{\beta }t-f\left(
b-l\right) \int_{b-l}^{b}g\left( t\right) d_{\beta }t  \\
&\geq &\int_{a}^{b-l}\left( f\left( t\right) -f\left( b-l\right) \right)
g\left( t\right) d_{\beta }t\geq 0.  \
\end{eqnarray*}%
Thus we prove the left hand side of the inequality (\ref{8}). The proof of
the right hand side of the inequality is similar and one can easily prove
theinequality by using (\ref{6}).
\end{proof}

\begin{theorem}
If $f$ is nonpositive and nondecreasing function and $g:\left[ a,b\right]
\rightarrow \left[ 0,M\right] ,$\ the inequalities in Theorem \bigskip \ref%
{Fractional Hayashi-Steffensen Inequality} are reversed.
\end{theorem}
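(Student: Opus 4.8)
The plan is to derive the reversed inequalities from Theorem~\ref{Fractional Hayashi-Steffensen Inequality} by a reflection in the sign of $f$, so that no part of the earlier argument has to be repeated. First I would introduce $h:=-f$. If $f$ is nonpositive and nondecreasing on $\left[a,b\right]$, then $h$ is nonnegative and nonincreasing there; assuming $f$ takes values in $\left[-M,0\right]$, the reflected function $h$ takes values in $\left[0,M\right]$. Hence $h$ fulfils exactly the hypotheses placed on the monotone factor in Theorem~\ref{Fractional Hayashi-Steffensen Inequality}, while the factor $g:\left[a,b\right]\rightarrow\left[0,M\right]$ and the constant $M$ are kept fixed.

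The step I would verify with care is that the number $l$ of (\ref{7}) is unaffected by this substitution. In the setting of Theorem~\ref{Fractional Hayashi-Steffensen Inequality} the quantity $l$ is a functional of $g$, $M$, $a$, $b$ and $\beta$ alone and does not involve the monotone factor $f$; consequently replacing $f$ by $h=-f$ leaves $l$, and therefore the three intervals $\left[a,b\right]$, $\left[b-l,b\right]$ and $\left[a,a+l\right]$ in (\ref{8}), unchanged. This is the crux of the reduction: were $l$ to depend on $f$, the windows of integration on the two sides would shift and the clean reversal would break down.

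Granting this, I would simply apply Theorem~\ref{Fractional Hayashi-Steffensen Inequality} to $h$, obtaining
\begin{eqnarray*}
M\int_{b-l}^{b}h\left( t\right) d_{\beta }t\leq \int_{a}^{b}h\left( t\right)
g\left( t\right) d_{\beta }t\leq M\int_{a}^{a+l}h\left( t\right) d_{\beta }t.
\end{eqnarray*}
Replacing $h$ by $-f$ throughout and multiplying every member by $-1$, which reverses both inequalities, gives
\begin{eqnarray*}
M\int_{b-l}^{b}f\left( t\right) d_{\beta }t\geq \int_{a}^{b}f\left( t\right)
g\left( t\right) d_{\beta }t\geq M\int_{a}^{a+l}f\left( t\right) d_{\beta }t,
\end{eqnarray*}
which is precisely (\ref{8}) with its inequalities reversed. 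There is no genuine analytic obstacle here; the entire content is the bookkeeping of the previous two paragraphs, namely checking that $h=-f$ inherits the correct sign, monotonicity and range, and confirming that $l$ is a functional of $g$ alone.
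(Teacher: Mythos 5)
Your proof is correct, but it takes a genuinely different route from the paper. The paper proves the reversed inequality directly: it redoes the computation of Theorem~\ref{Fractional Hayashi-Steffensen Inequality} with the signs adjusted, decomposing $\int_{a}^{b}f g\,d_{\beta }t-M\int_{a}^{a+l}f\,d_{\beta }t$, using $f(t)\leq f(a+l)$ on $\left[ a,a+l\right]$ together with $0\leq g\leq M$ and inequality (\ref{6}), proving the right-hand side in detail and asserting the left-hand side is symmetric. You instead reduce to the original theorem via the reflection $h:=-f$, which converts nonpositive/nondecreasing into nonnegative/nonincreasing, and then multiply through by $-1$; this yields both reversed inequalities simultaneously and avoids repeating any analysis. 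Your identification of the crux — that $l$ in (\ref{7}) is a functional of $g$, $M$, $a$, $b$, $\beta$ and not of the monotone factor $f$ — is exactly right: although (\ref{7}) is written with the lemma's $f$, in the theorem that role is played by $g$ (as the paper's own proof confirms when it invokes ``the conditions on function $g$'' to get (\ref{6})), so the windows $\left[ b-l,b\right]$ and $\left[ a,a+l\right]$ are indeed unchanged under $f\mapsto -f$. The one caveat is the range hypothesis: to invoke the original theorem as a black box you need $h=-f$ to map into $\left[ 0,M\right]$, i.e.\ $f\geq -M$, which you assume explicitly; the statement being proved does not impose this, and the paper's direct proof never uses any bound on $\left| f\right|$ (just as the original theorem never uses $f\leq M$). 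This is a minor mismatch inherited from the paper's own imprecision about ranges, and you handled it honestly; aside from it, your argument is cleaner and more economical than the paper's.
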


\begin{proof}
Assume $f$ is nonpositive and nondecreasing. In this case, we prove the
right hand side of the inequality (\ref{8}). Using the inequality (\ref{8})
and the inequality (\ref{6}), we have
\begin{eqnarray*}
&&\int_{a}^{b}f\left( t\right) g\left( t\right) d_{\beta
}t-M\int_{a}^{a+l}f\left( t\right) d_{\beta }t  \\
&=&\int_{a}^{a+l}f\left( t\right) g\left( t\right) d_{\beta
}t+\int_{a+l}^{b}f\left( t\right) g\left( t\right) d_{\beta
}t-M\int_{a}^{a+l}f\left( t\right) d_{\beta }t  \\
&=&\int_{a+l}^{b}f\left( t\right) g\left( t\right) d_{\beta
}t+\int_{a}^{a+l}\left( g\left( t\right) -M\right) f\left( t\right) d_{\beta
}t  \\
&\geq &\int_{a+l}^{b}f\left( t\right) g\left( t\right) d_{\beta }t+f\left(
a+l\right) \int_{a}^{a+l}\left( g\left( t\right) -M\right) d_{\beta }t  \\
&\geq &\int_{a+l}^{b}f\left( t\right) g\left( t\right) d_{\beta }t-f\left(
a+l\right) \int_{a}^{a+l}g\left( t\right) d_{\beta }t  \\
&=&\int_{a+l}^{b}\left( f\left( t\right) -f\left( a+l\right) \right) g\left(
t\right) d_{\beta }t\geq 0.  \
\end{eqnarray*}%
Thus the right hand side of the inequality (\ref{8}) holds. The proof of the
left hand side of the reversed inequality is similar as in the proof of the
Theorem \ref{Fractional Hayashi-Steffensen Inequality}.
\end{proof}

In the following we obtain some results by using \bigskip $\beta -fractional$
Steffensen inequality.

\begin{theorem}
\label{theorem5}\bigskip Let $\beta \in \left( 0,1\right] $ and the function
$f:\left[ a,b\right] \rightarrow \mathbb{R}$ be $\left( n+1\right) $ times $%
\beta -fractional$ differentiable. We assume that $D^{\left( n+1\right)
\beta }f$ is increasing and $D^{n\beta }f$ is decreasing on $\left[ a,b%
\right] .$ Then inequalities
\begin{eqnarray*}
D^{n\beta }f\left( a+l\right) -D^{n\beta }f\left( a\right) &\leq &\left(
n+1\right) !\beta ^{n+1}\left[ \left( b+\frac{1}{\Gamma \left( \beta \right)
}\right) ^{\beta }-\left( a+\frac{1}{\Gamma \left( \beta \right) }\right)
^{\beta }\right] ^{-n-1}\int_{a}^{t}R_{n.f}\left( a,\tau \right) d_{\beta
}\tau  \\
&\leq &D^{n\beta }f\left( b\right) -D^{n\beta }f\left( b-l\right)  \
\end{eqnarray*}%
hold where
\begin{eqnarray*}
l:=\frac{b-a}{n+2}.  \\
\end{eqnarray*}
\end{theorem}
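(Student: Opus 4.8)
The plan is to read the central quantity as a $\beta$-fractional Steffensen average of $D^{(n+1)\beta}f$ against a power of $\left(t+\frac{1}{\Gamma(\beta)}\right)^{\beta}$, and then to apply the \emph{reversed} Steffensen inequality, i.e.\ the theorem for nonpositive nondecreasing integrands that immediately follows Theorem~\ref{Fractional Hayashi-Steffensen Inequality}. For brevity write $P(\tau):=\left(\tau+\frac{1}{\Gamma(\beta)}\right)^{\beta}$, a strictly increasing function. First I would use the second identity in Corollary~\ref{corollary2} to rewrite the middle expression (reading its upper limit as $b$; for a genuinely free endpoint $t$ one would instead invoke the Lemma yielding (\ref{4})):
\[
(n+1)!\,\beta^{\,n+1}\,[P(b)-P(a)]^{-n-1}\int_{a}^{b}R_{n.f}(a,\tau)\,d_{\beta}\tau=[P(b)-P(a)]^{-n-1}\int_{a}^{b}[P(b)-P(\tau)]^{n+1}D_{\tau}^{(n+1)\beta}f(\tau)\,d_{\beta}\tau,
\]
the constants $(n+1)!$ and $\beta^{n+1}$ cancelling those carried inside the corollary.

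The decisive observation is about signs. Since the $\beta$-derivative is a strictly positive multiple of the ordinary derivative and $D^{n\beta}f$ is assumed decreasing, the function $h(\tau):=D_{\tau}^{(n+1)\beta}f(\tau)=D_{\tau}^{\beta}\big(D^{n\beta}f\big)(\tau)$ is nonpositive on $[a,b]$; by hypothesis it is also nondecreasing. Hence $h$ is exactly the type of integrand (nonpositive, nondecreasing) for which the reversed Steffensen theorem is stated, and both monotonicity hypotheses are genuinely used.

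Next I would fix the auxiliary data for Steffensen: set $g(\tau):=[P(b)-P(\tau)]^{n+1}$ and $M:=[P(b)-P(a)]^{n+1}$, so that $g:[a,b]\to[0,M]$ is $\beta$-fractional integrable with $g(a)=M$ and $g(b)=0$. The point of this choice is that it reproduces the prescribed length $l$. Using $d_{\beta}\tau=\frac{1}{\beta}\,dP(\tau)$ and the substitution $u=P(\tau)$ one evaluates
\[
\int_{a}^{b}[P(b)-P(\tau)]^{n+1}\,d_{\beta}\tau=\frac{[P(b)-P(a)]^{n+2}}{\beta\,(n+2)},
\]
so that the Steffensen parameter $\dfrac{\beta(b-a)}{M[P(b)-P(a)]}\displaystyle\int_{a}^{b}g(t)\,d_{\beta}t$ collapses to $\dfrac{b-a}{n+2}=l$, matching the statement exactly.

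Applying the reversed Steffensen inequality to the nonpositive nondecreasing $h$ and to $g\in[0,M]$ gives
\[
M\int_{a}^{a+l}h(\tau)\,d_{\beta}\tau\le\int_{a}^{b}h(\tau)g(\tau)\,d_{\beta}\tau\le M\int_{b-l}^{b}h(\tau)\,d_{\beta}\tau.
\]
Dividing by $M$ turns the middle integral into the expression produced in the first step, while the two outer integrals are evaluated by the fundamental relation $\int_{x_{1}}^{x_{2}}D_{\tau}^{(n+1)\beta}f\,d_{\beta}\tau=D^{n\beta}f(x_{2})-D^{n\beta}f(x_{1})$ already used in the proof of the Taylor formula, producing $D^{n\beta}f(a+l)-D^{n\beta}f(a)$ and $D^{n\beta}f(b)-D^{n\beta}f(b-l)$ respectively. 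This is precisely the asserted double inequality. I expect the main obstacle to be the sign bookkeeping of the second paragraph: recognizing that it is the \emph{decreasing} hypothesis on $D^{n\beta}f$ that forces $D^{(n+1)\beta}f\le 0$ and hence selects the reversed, rather than the direct, Steffensen inequality, together with verifying that the chosen $g$ and $M$ pin the Steffensen length down to $l=\frac{b-a}{n+2}$.
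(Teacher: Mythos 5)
Your proposal is correct and matches the paper's proof in every essential respect: the same weight function built from $\left[\left(b+\frac{1}{\Gamma\left(\beta\right)}\right)^{\beta}-\left(\tau+\frac{1}{\Gamma\left(\beta\right)}\right)^{\beta}\right]^{n+1}$, the same computation pinning down $l=\frac{b-a}{n+2}$, the same appeal to Corollary~\ref{corollary2} for the middle term and to the fundamental relation for the outer integrals, and the same sign observation that the hypotheses force $D^{\left(n+1\right)\beta}f\leq 0$. The only difference is cosmetic: the paper applies the direct Steffensen inequality (Theorem~\ref{Fractional Hayashi-Steffensen Inequality} with $M=1$) to the nonnegative nonincreasing function $F:=-D^{\left(n+1\right)\beta}f$ with the weight normalized into $\left[0,1\right]$ and flips signs at the end, whereas you apply the reversed (nonpositive, nondecreasing) version to $D^{\left(n+1\right)\beta}f$ itself with the unnormalized weight, which is the same argument up to a sign flip.
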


\begin{proof}
We define the function $F:=-D^{\left( n+1\right) \beta }f.$ Since $D^{\left(
n+1\right) \beta }f$ is increasing and $D^{n\beta }f$ is decreasing on $%
\left[ a,b\right] ,$ we have $D^{\left( n+1\right) \beta }f$ $\leq 0.$ So
that $F\geq 0$ and decreasing on $\left[ a,b\right] .$ For the assumptions
of Steffensen's inequality we define
\begin{eqnarray*}
g\left( t\right) :=\frac{\left[ \left( b+\frac{1}{\Gamma \left( \beta
\right) }\right) ^{\beta }-\left( t+\frac{1}{\Gamma \left( \beta \right) }%
\right) ^{\beta }\right] ^{n+1}}{\left[ \left( b+\frac{1}{\Gamma \left(
\beta \right) }\right) ^{\beta }-\left( a+\frac{1}{\Gamma \left( \beta
\right) }\right) ^{\beta }\right] ^{n+1}}  \\
\end{eqnarray*}%
for $t\in \left[ a,b\right] ,~n\geq -1.$ We apply $M=1$ in Theorem \bigskip %
\ref{Fractional Hayashi-Steffensen Inequality}, the functions $F$ and $g$
satisfy the assumptions of Steffensen's inequality. We can write
\begin{equation}
-\int_{b-l}^{b}D_{t}^{\left( n+1\right) \beta }f\left( t\right) d_{\beta
}t\leq -\int_{a}^{b}D_{t}^{\left( n+1\right) \beta }f\left( t\right) g\left(
t\right) d_{\beta }t\leq -\int_{a}^{a+l}D^{\left( n+1\right) \beta }f\left(
t\right) d_{\beta }t.  \label{10}
\end{equation}%
If we simplify (\ref{10}) using \bigskip Corollary \ref{corollary2}, we
obtain
\begin{eqnarray*}
D^{n\beta }f\left( a+l\right) -D^{n\beta }f\left( a\right) &\leq
&\left( n+1\right) !\beta ^{n+1}\left[ \left( b+\frac{1}{\Gamma
\left( \beta \right) }\right) ^{\beta }-\left( a+\frac{1}{\Gamma
\left( \beta \right) }\right) ^{\beta }\right]
^{-n-1}\int_{a}^{t}R_{n.f}\left( a,\tau \right) d_{\beta
}\tau  \\
&\leq &D^{n\beta }f\left( b\right) -D^{n\beta }f\left( b-l\right).  \
\end{eqnarray*}%
This completes the proof.
\end{proof}

\begin{remark}
\bigskip Let $D^{\left( n+1\right) \beta }f$ is decreasing and $D^{n\beta }f$
is increasing on $\left[ a,b\right] .$ If we define the function $%
F:=D^{\left( n+1\right) \beta }f,$ one can easily show that the above
inequalities in Theorem \ref{theorem5} are reversed.
\end{remark}

If we choose $n=0$ in Theorem \ref{theorem5}, we obtain the
well-known inequality called Hermite-Hadamard.

\begin{definition}
\bigskip \bigskip Let $\beta \in \left( 0,1\right] $ and the function $f:%
\left[ a,b\right] \rightarrow \mathbb{R}$ be $\beta -fractional$
differentiable. If $D^{\beta }f$ is increasing and $f$ is decreasing on $%
\left[ a,b\right] ,$ then we have the following Hermite-Hadamard inequality
\begin{equation}
f\left( \frac{a+b}{2}\right) \leq \frac{\beta }{\left( b+\frac{1}{\Gamma
\left( \beta \right) }\right) ^{\beta }-\left( a+\frac{1}{\Gamma \left(
\beta \right) }\right) ^{\beta }}\int_{a}^{b}f\left( t\right) d_{\beta
}t\leq f\left( b\right) +f\left( a\right) -f\left( \frac{a+b}{2}\right) .
\label{11}
\end{equation}
\end{definition}

\begin{remark}
\bigskip If $D^{\beta }f$ is decreasing and $f$ is increasing on $\left[ a,b%
\right] ,$ then the inequalities (\ref{11}) are reversed.
\end{remark}

\begin{corollary}
In this paper, we have studied Taylor formula with a new parameter and have
obtained new results for beta-fractional derivative. The main theorem
improves previously results and this presents a new approach to $\beta
-fractional$ version of Steffensen inequality and well known
Hermite-Hadamard inequality.
\end{corollary}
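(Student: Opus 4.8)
The plan is to obtain this Hermite--Hadamard inequality as the special case $n=0$ of Theorem \ref{theorem5}, so that the entire argument reduces to substitution and simplification. First I would verify that the hypotheses match: with $n=0$ the requirement of Theorem \ref{theorem5} that $D^{\left(n+1\right)\beta}f$ be increasing and $D^{n\beta}f$ be decreasing becomes exactly the assumption that $D^{\beta}f$ is increasing and $f$ is decreasing. Moreover $l=\frac{b-a}{n+2}$ collapses to $l=\frac{b-a}{2}$, whence $a+l=b-l=\frac{a+b}{2}$, and the two outer quantities in Theorem \ref{theorem5} become
\[
D^{n\beta}f\left(a+l\right)-D^{n\beta}f\left(a\right)=f\left(\frac{a+b}{2}\right)-f\left(a\right),
\]
\[
D^{n\beta}f\left(b\right)-D^{n\beta}f\left(b-l\right)=f\left(b\right)-f\left(\frac{a+b}{2}\right).
\]

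Next I would evaluate the middle quantity. With $n=0$ the prefactor is $\left(n+1\right)!\,\beta^{n+1}=\beta$ and the bracketed power reduces to $\Delta^{-1}$, where I write $\Delta:=\left(b+\frac{1}{\Gamma\left(\beta\right)}\right)^{\beta}-\left(a+\frac{1}{\Gamma\left(\beta\right)}\right)^{\beta}$. Reading Definition \ref{Taylor Remainder} at $n=0$, and in particular its integral form (\ref{3}), the remainder is the zeroth-order Taylor defect $R_{0.f}\left(a,\tau\right)=f\left(\tau\right)-f\left(a\right)$. Taking the upper limit of integration to be $b$ and using linearity of the $\beta$-integral together with the elementary value
\[
\int_{a}^{b}d_{\beta}\tau=\int_{a}^{b}\left(\tau+\frac{1}{\Gamma\left(\beta\right)}\right)^{\beta-1}d\tau=\frac{1}{\beta}\Delta,
\]
I would get
\[
\int_{a}^{b}R_{0.f}\left(a,\tau\right)d_{\beta}\tau=\int_{a}^{b}f\left(\tau\right)d_{\beta}\tau-\frac{f\left(a\right)}{\beta}\Delta .
\]
Multiplying by $\beta\Delta^{-1}$ then turns the middle quantity into $\frac{\beta}{\Delta}\int_{a}^{b}f\left(\tau\right)d_{\beta}\tau-f\left(a\right)$.

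Finally I would substitute these three simplified expressions back into the double inequality of Theorem \ref{theorem5}. On the left, the $-f\left(a\right)$ appearing in the middle quantity cancels the $-f\left(a\right)$ of the outer term, yielding $f\left(\frac{a+b}{2}\right)\leq\frac{\beta}{\Delta}\int_{a}^{b}f\left(t\right)d_{\beta}t$; on the right, transposing $f\left(a\right)$ gives $\frac{\beta}{\Delta}\int_{a}^{b}f\left(t\right)d_{\beta}t\leq f\left(b\right)+f\left(a\right)-f\left(\frac{a+b}{2}\right)$. Together these are precisely (\ref{11}). I expect no genuine obstacle: the whole statement is a specialization, and the only care required is the bookkeeping of the $f\left(a\right)$ cancellation and confirming $\int_{a}^{b}d_{\beta}\tau=\Delta/\beta$ so that the normalizing constant $\beta/\Delta$ emerges. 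The single point worth flagging is reading the remainder consistently as $R_{0.f}\left(a,\tau\right)=f\left(\tau\right)-f\left(a\right)$, which is the value forced by the integral representation (\ref{3}) with $n=0$.
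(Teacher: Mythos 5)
Your proposal is correct and is essentially the paper's own route: this ``Corollary'' is really a concluding summary whose only checkable mathematical content is the Hermite--Hadamard inequality (\ref{11}), and the paper obtains it exactly as you do, via the remark that choosing $n=0$ in Theorem \ref{theorem5} yields it. Your write-up simply supplies the details the paper leaves implicit (reading the upper limit $t$ of the middle integral in Theorem \ref{theorem5} as $b$, consistent with Corollary \ref{corollary2}; taking $R_{0.f}\left(a,\tau\right)=f\left(\tau\right)-f\left(a\right)$ from the integral form (\ref{3}) rather than the typo-marred display in Definition \ref{Taylor Remainder}; and the cancellation of $f\left(a\right)$), all of which is sound.
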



%
%



\end{document}